\newtheorem{theorem}{Theorem}
\newtheorem{proposition}[theorem]{Proposition}
\newtheorem{corollary}[theorem]{Corollary}
\def\P{{\mathcal P}}
\def\Q{{\mathcal Q}}
\def \sh{{\rm sh \:}}
\def \sgn{{\rm sgn }}
\title[Determinantal transition kernels]{Determinantal transition kernels for some interacting particles on the line}
\author{A. B. Dieker}
\address{University College Cork, Probability Group, 17 South Bank, Crosses Green, Cork, Ireland}
\email{t.dieker@proba.ucc.ie}
\author{J. Warren}
\address{University of Warwick, Department of Statistics, Coventry, CV4 7AL, United Kingdom}
\email{j.warren@warwick.ac.uk}
\begin{document}
%\maketitle
\begin{abstract}
We find the transition kernels for four Markovian interacting particle systems on the line,
by proving that each of these kernels is intertwined with a Karlin-McGregor type kernel.
The resulting kernels all inherit the determinantal structure from the
Karlin-McGregor formula, and have a similar form to Sch\"utz's kernel for
the totally asymmetric simple exclusion process.

\vspace{5mm}

\noindent {\sc Resum\'e.} Nous trouvons les noyaux de transition de quatre syst\`emes markoviens de particules en interaction sur une ligne, en prouvant que chacun de ces noyaux s'entrelace avec un noyau du type de Karlin-McGregor.  Tous les noyaux r\'esultants  h\'eritent de la structure de d\'eterminant de la formule de  Karlin-McGregor et ont une forme  similaire \`a celle du noyau  de Sch\"utz pour le processus d'exclusion simple totalement asym\'etrique.
\end{abstract}

\keywords{Interacting particle system, intertwining, Karlin-McGregor theorem, Markov transition kernel, Robinson-Schensted-Knuth correspondence, Sch\"utz theorem, stochastic recursion, symmetric functions.}
\subjclass[2000]{%
Primary:
60J05, %markov process discrete time
60K35, %interacting particles, percolation
05E10; %tableaux, representations S_n
Secondary:
05E05, %symmetric functions
15A52. %random matrices
}

\maketitle
\section{Introduction}
Non-colliding Markov processes are canonical examples of stochastic processes with a determinantal transition kernel,
given by the Karlin-McGregor formula.
A determinantal transition kernel of a different form, yet similar to the Karlin-McGregor
kernel, was encountered by Sch\"utz~\cite{schuetz:exclusion1997}
in his study of the totally asymmetric simple exclusion process.
This work has stimulated much recent research, e.g.,
\cite{borodin:fluctuation2007,johansson:meixner2007,povolotskypriezzhev:parallel2006,rakosschutz:bethe2006,tracywidom:fluctuation2007,warren:dyson2007}.

In this note we explicitly connect Sch\"utz type formulae for
particle systems and Karlin-McGregor type formulae for non-colliding processes.
Our approach builds upon deep connections between particle processes and non-colliding processes (or random-matrix theory),
which have been recently discovered \cite{baryshnikov:guequeues2001,draief:queuesstores2005,johansson:shapefluc2000,oconnell:pathRS2003}.
A combinatorial correspondence known as the Robinson-Schensted-Knuth (RSK) correspondence links these processes.
This RSK correspondence gives a {\em coupling} of the non-colliding
process and the particle process.
Our key contributions are that this coupling implies an {\em intertwining} of their transition semigroups,
and that this intertwining is enough to find the transition kernel of
the particle process.

We give a number of new formulae by applying this method to four variants of the RSK correspondence.
Augmented with systems arising from suitable limiting procedures, the particle systems we treat in this way
are known to play pivotal roles in a wide range of interesting applied problems.
For instance, they appear in the context of queues in series, last-passage percolation,
growth models, and fragmentation models.

This note is organized as follows. In Section~\ref{sec:interacting}, we introduce four
interacting particle systems and we present the associated transition kernels.
Section~\ref{sec:RSK} describes the four variants of the RSK correspondence we use
in our analysis, and derives the aforementioned intertwining of the semigroups.
Finally, it is the topic Section~\ref{sec:determinant} to use this intertwining
for finding the transition mechanism of the interacting particles.

\section{Interacting particles on the line; main results}
\label{sec:interacting}
We are concerned with  a system of $N$ particles, each with a
position in the integer lattice ${\mathbf Z}$, evolving in discrete
time. We will consider four possible  cases. Particles will move
from the left to the right  making either Bernoulli or geometrically
sized jumps, with one of two possible interactions that maintains their relative orderings (blocking
or pushing). We begin by
describing these four processes more precisely. In each case
$Y_i(n)$ denotes the position of particle number $i$ at time $n$.
We order the particles, so that $Y$ takes values in either $W^N$
or $\hat W^N$, where
\begin{eqnarray*}
W^N&=&\bigl( z \in {\mathbf Z}^N; z_N\le z_{N-1}\le \ldots\le z_1\bigr)\\
\hat W^N&=&\bigl( z \in {\mathbf Z}^N; z_1\le z_2\le \ldots\le z_N\bigr).
\end{eqnarray*}
Throughout, we let $p=(p_1,p_2,\ldots ,p_N)$ be a vector with each $p_k\in (0,1)$.

\medskip
{\bf CASE A: Geometric jumps with pushing.} Particles are labelled
from left to right, so $Y_1(n) \leq Y_2(n) \leq \ldots \leq Y_N(n)$.
Between time $n-1$ and $n$, each of the particles moves to the right
according to some geometrically distributed jump, having parameter $p_i$ for particle $i$. The order in which
the particles jump is given by their labels, so the leftmost
particle jumps first. Overtaken particles (if any) are moved to the same
position as the jumping particle, a position from which the next particle subsequently makes its own jump.
One can thus think of particles `pushing' other particles to maintain their relative orderings.

This leads to the following stochastic recursion.
The evolution is generated from a family $\bigl( \xi(k,n);
k\in \{1,2,\ldots N\}, n \in {\mathbf N} \bigr)$ of independent
geometric random variables satisfying ${\mathbf
P}(\xi(k,n)=r)=(1-p_k)p_k^r$ for $r=0,1,2, \ldots$, via the
recursions $Y_1(n)=Y_1(n-1)+\xi(1,n)$, and for $k=2,3,\ldots N$,
\[
Y_k(n)=\max\bigl(Y_k(n-1), Y_{k-1}(n)\bigr)+\xi(k,n).
\]
Note that $Y=\bigl(Y(n);n\ge 0\bigr)$ is a Markov chain on $\hat W^N$.

One application area where this recursion arises is the theory of queueing networks.
Indeed, the particle system with exponentially distributed jumps, which is obtained after a suitable
limiting procedure, relates to a series Jackson network.
Here $Y(n)$ corresponds to the departure instants of the $n$th customer from each of $N$ queues in series.
These networks are investigated further in our companion paper~\cite{diekerwarren:jackson2007}.

The vector $Y$ also plays an important role in the context of directed last-passage percolation
with geometrically distributed travel times and `origin' $(1,1)$, where
$Y(n)$ can be interpreted as the vector of maximal travel times
to the sites $(n+1,1),\ldots,(n+1,N)$.
Very recently, Johansson~\cite{johansson:meixner2007} has derived the transition kernel
of $Y$ in the case of equal rates $p_1=\ldots=p_N$, with different methods than presented here.

\medskip
{\bf CASE B: Bernoulli jumps with blocking.} Particles are labelled
from right to left, so $Y_N(n) \leq Y_{N-1}(n) \leq \ldots \leq
Y_1(n)$. Between time $n-1$ and $n$ each particle attempts to move
one step to the right, but it is constrained
not to overtake the particle to its right. Particle $i$ moves with probability $p_i$. 
The particles are now updated from right to left, so it is the updated
position of the particle to the right that acts as a block.

The evolution is generated from a family $\bigl( \xi(k,n); k\in \{1,2,\ldots N\}, n
\in {\mathbf N} \bigr)$  of independent Bernoulli random variables
satisfying ${\mathbf P}(\xi(k,n)=+1)=1-{\mathbf P}(\xi(k,n)=0)=p_k$,
via the recursions $Y_1(n)=Y_1(n-1)+\xi(1,n)$, and for $k=2,3,\ldots
N$,
\[
Y_k(n)=\min\bigl(Y_k(n-1)+\xi(k,n), Y_{k-1}(n)\bigr).
\]
In particular, $Y$ is a Markov chain on $W^N$.

The process $Y$ has been investigated by R{\'a}kos and Sch{\"u}tz~\cite{rakosschutz:fragmentation2005}
in the context of a fragmentation model.
On shifting the $i$-th particle $i$ positions to the left, $Y$ corresponds to the discrete-time 
totally asymmetric simple exclusion process (TASEP) with sequential updating.
Moreover, the process arises in a directed first-passage percolation model known as the Sepp\"al\"ainen model~\cite{seppalainen:shape1998} with `origin' $(0,0)$,
where $Y(n)$ corresponds to the vector of instants at which the sites $(n,0),\ldots,(n,N-1)$ become wet.

An important process arises if we scale $Y$ as in the law of small numbers, i.e.,
by setting $p_i=\alpha_i/M$ and considering $Y(\lfloor Mt\rfloor)$ for $t\in\mathbf R_+$ as $M\to\infty$.
In the case of equal rates, the resulting continuous-time Markov process describes, after a deterministic shift,
the positions of $N$ particles in the (continuous-time) TASEP. 
This is the framework originally studied by Sch\"utz~\cite{schuetz:exclusion1997}, who derives
the transition kernel of this process. It has recently been extended to particles hopping at different
rates by R{\'a}kos and Sch{\"u}tz~\cite{rakosschutz:bethe2006}.

\iffalse
The resulting continuous-time Markov process, say $Y'$, is closely related to the position of $N$ particles in
the totally asymmetric simple exclusion process (TASEP).
Indeed, $Y'$ describes the particle positions if each particle waits for an exponentially distributed amount of time (mutually independent)
before attempting to jump to the next site on its right. The attempt is successful only if the particle does not need to overtake
a particle to its right.
If the exponential distributions all have equal rates,
this model is a slight modification of the usual TASEP since particles may occupy the same site
which is precluded in the TASEP model.
Still, after carrying out a deterministic shift of the particle positions, $Y'$ represents
the positions of $N$ particles in the TASEP. The kernel of the resulting process has been found by
Sch\"utz~\cite{schuetz:exclusion1997}, and has later been extended to the case of particle-dependent
exponential waiting-time distributions~\cite{rakosschutz:bethe2006}.
\fi

The same continuous-time process is also of significant importance for series Jackson queueing networks
as well as for a corner-growth model.
In the queueing context, it represents the cumulative number of departures from each of the queues;
see \cite{diekerwarren:jackson2007}.
In the corner-growth model, it represents the height of the first $N$ columns.
We refer to K\"onig's survey paper~\cite{konig:orthogonal2005} for these and further connections,
such as the relation between the $N$th component of this process
and directed last-passage percolation with exponentially distributed travel times.

\medskip
{\bf CASE C: Geometric jumps with blocking.} Once again particles
are labelled from right to left,  so $Y_N(n) \leq Y_{N-1}(n) \leq
\ldots \leq Y_1(n)$. Between time $n-1$ and $n$ each particle
attempts to move a geometrically distributed number of steps to the
right, starting with the leftmost particle. As in case B, a particle
is blocked by the closest particle
to its right if it tries to overtake another particle,
but this time it is the old position of that particle that acts as a block.

The evolution is generated from a family $\bigl( \xi(k,n); k\in
\{1,2,\ldots N\}, n \in {\mathbf N} \bigr)$  of independent
geometric random variables satisfying ${\mathbf
P}(\xi(k,n)=r)=(1-p_k)p_k^r$ for $r=0,1,2, \ldots$, via the
recursions $Y_1(n)=Y_1(n-1)+\xi(1,n)$, and  for $k=2,3,\ldots N$,
\[
Y_k(n)=\min\bigl(Y_k(n-1)+\xi(k,n), Y_{k-1}(n-1)\bigr).
\]

As laid out by Draief {\em et al.}~\cite{draief:queuesstores2005},
this recursion arises in the study of so-called tandem stores in series.
The random vector $Y$ represents the
cumulative demand met at each of the stores when the first store is saturated.
Alternatively, $Y$ can be interpreted as the minimum-weight vector of
certain weighted lattice paths.

\medskip
{\bf CASE D: Bernoulli jumps with pushing.} Now it is natural to
label  particles left to right, so $Y_1(n) \leq Y_2(n) \leq \ldots
\leq Y_N(n)$. We update from right to left, and preserve the
ordering by pushing particles to the right.
The evolution is
generated from a family $\bigl( \xi(k,n); k\in \{1,2,\ldots N\}, n
\in {\mathbf N} \bigr)$ of independent Bernoulli random variables
satisfying ${\mathbf P}(\xi(k,n)=+1)=1-{\mathbf P}(\xi(k,n)=0)=p_k$,
via the recursions $Y_1(n)=Y_1(n-1)+\xi(1,n)$, and for $k=2,3,\ldots
N$,
\[
Y_k(n)=\max\bigl(Y_k(n-1)+\xi(k,n), Y_{k-1}(n)\bigr).
\]
The process $Y$ is the discrete-time analogue of a particle system studied by Alimohammadi {\em et al.}~\cite{alimohammadi:exact1998}, which has been studied further by Borodin and Ferrari~\cite{borodinferrari:large2007}.
It also plays an important role in the directed {\em last}-passage analogue of the Sepp\"al\"ainen model.

\medskip
We summarize the description of the four cases in the following table.

\begin{table}[h]
\begin{tabular}{|l|c|c|c|c|}
\hline
\textbf{Case} & A & B& C& D\\
\hline
\hline
\textbf{Jump distribution} & geometric & Bernoulli & geometric & Bernoulli \\
\hline
\textbf{Interaction} & pushing & blocking & blocking & pushing \\
\hline
\textbf{Updating} & from left & from right & from left & from right\\
\hline
\end{tabular}
\end{table}

We need some well-known symmetric functions in order to present
the Markov transition kernel of $Y$ in each of the four cases.
The $r$th complete homogeneous
symmetric polynomials in the indeterminates $\alpha_1,\ldots
\alpha_N$ is given by
\[
h_r(\alpha)=\sum_{k_1\geq 0, \ldots, k_N \geq 0; k_1+k_2+\cdots +k_N=r}
\alpha_1^{k_1}\alpha_2^{k_2} \cdots \alpha_N^{k_N}.
\]
By convention $h_0=1$ and $h_r=0$ for $r<0$.  Now for $1\leq i <
j \leq N$, let $h^{(ij)}_r(\alpha) = h_r(\alpha^{(ij)})$ where
$\alpha^{(ij)}$ is the $N$-vector
$(0,\ldots,0,\alpha_{i+1},\alpha_{i+2}, \ldots ,\alpha_j,0, \ldots
0)$ obtained from $\alpha$ by setting the first $i$ weights, and the
last $N-j$  weights  equal to $0$. Equivalently it is the $r$th
complete homogeneous symmetric polynomial in the indeterminates
$\alpha_{i+1}, \ldots, \alpha_j$.
We set $h^{(jj)}_r(\alpha)= {\mathbf 1}(r=0)$.

We also need $e_r$, the $r$th elementary symmetric function defined as
\[
e_r(\alpha)= \sum _{k_1<k_2< \cdots <k_r} \alpha_{k_1} \cdots \alpha_{k_r}.
\]
In analogy with the complete homogeneous symmetric functions, we
use the conventions $e^{(jj)}_r(\alpha)={\mathbf 1}(r=0)$ and $e_0=1$.
We also set $e^{(ij)}_r(\alpha)= e_r(\alpha^{(ij)})$ and $e_r=0$ for $r<0$.

Given a function $f$ on $\mathbf Z$ and a vector $\alpha\in \mathbf R^N_+$, we write
\begin{equation}
\label{eq:defwij}
f^{(ij)}_\alpha(k)= \begin{cases}
 \sum_{\ell=0}^{i-j} (-1)^\ell e^{(ji)}_\ell(\alpha) f(k+\ell) & \text{ if  }j \leq i, \\
 \sum_{\ell=0}^\infty h^{(ij)}_{\ell} (\alpha) f(k+\ell)& \text{ if }  i\leq j,
\end{cases}
\end{equation}
and
\[
\hat f^{(ij)}_{\alpha}(k)= \begin{cases}
 \sum_{\ell=0}^{i-j} (-1)^\ell e^{(ji)}_\ell(\alpha) f(k-\ell) & \text{ if  }j \leq i, \\
 \sum_{\ell=0}^\infty h^{(ij)}_{\ell} (\alpha) f(k-\ell)& \text{ if }  i\leq j,
\end{cases}
\]
provided all series converge absolutely.
Our main theorem uses this notation for functions $f$ belonging to two different
families, $\bigl(w_n; n\in \mathbf Z\bigr)$ and $\bigl(v_n; n\in \mathbf Z\bigr)$, for which the desired
convergence holds. 
These families are defined through
\[
w_n(k)=\binom {n-1+k}{k}{\mathbf 1}(k\ge 0; n\ge 0) \quad\text{and}\quad
v_n(k)=\binom {n}{k}{\mathbf 1}(0\le k\le n; n\ge 0).
\]
We write $w^{(ij)}_{n,\alpha} $ for $f^{(ij)}_\alpha$ with $f=w_n$,
and define $\hat w^{(ij)}_{n,\alpha}$, $v^{(ij)}_{n,\alpha} $, and $\hat v^{(ij)}_{n,\alpha}$ similarly.
Moreover, we abbreviate the vector $(p^{-1}_1,\ldots,p^{-1}_N)$ by $p^{-1}$,
and define the vector $\pi$ through $\pi_i=p_i/(1-p_i)$.

\begin{theorem}
The transition kernel $Q_n$ of the process $Y$ is given by the following expressions.

\medskip
{\bf CASE A: Geometric jumps with pushing.}
We have for $y,y'\in \hat W^N$,
\[
Q_n(y,y')=\prod_{k=1}^N\left[(1-p_k)^n p_k^{y_k'-y_k}\right]
\det\left\{\hat w^{(ij)}_{n,p^{-1}}(y'_i-y_j+i-j)\right\}.
\]

\medskip
{\bf CASE B: Bernoulli jumps with blocking.}
We have for $y,y'\in W^N$,
\[
Q_n(y,y')=\prod_{k=1}^N\left[(1-p_k)^n \pi_k^{y_k'-y_k}\right]
\det\left\{v^{(ij)}_{n,\pi} (y'_i-y_j-i+j)\right\}.
\]

\medskip
{\bf CASE C: Geometric jumps with blocking.}
We have for $y,y'\in W^N$,
\[
Q_n(y,y')=\prod_{k=1}^N\left[(1-p_k)^n p_k^{y_k'-y_k}\right]
\det\left\{w^{(ij)}_{n,p}(y'_i-y_j-i+j)\right\}.
\]

\medskip
{\bf CASE D: Bernoulli jumps with pushing.}
We have for $y,y'\in \hat W^N$,
\[
Q_n(y,y')=\prod_{k=1}^N\left[(1-p_k)^n \pi_k^{y_k'-y_k}\right]
\det\left\{\hat v^{(ij)}_{n,\pi^{-1}}(y'_i-y_j+i-j)\right\}.
\]
\end{theorem}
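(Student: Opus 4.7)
The plan is to implement the strategy sketched in the introduction: use a variant of the Robinson-Schensted-Knuth correspondence to couple $Y$ with a larger Markov chain $X$ taking values in an interlacing triangular array, then exploit the resulting intertwining of semigroups to read off $Q_n$ from a Karlin-McGregor type transition kernel of $X$. All four cases should follow the same skeleton, with the particular RSK variant (row vs.\ column insertion, left-to-right vs.\ right-to-left scanning) and the one-step jump law (geometric vs.\ Bernoulli) chosen to match Cases A--D.

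Concretely, for each case I would first construct $X$ so that $Y$ is a deterministic projection $Y=\Phi(X)$; the interlacing forces a natural identification of the array with $N$ walkers, walker $k$ sitting in row $k$ and performing geometric (resp.\ Bernoulli) jumps of parameter $p_k$. The RSK-based coupling shows that $Y$ is Markov as a function of $X$, which is equivalent to the intertwining $Q_n\Lambda=\Lambda P_n$, where $\Lambda$ is the stochastic kernel from $Y$-space to array-space induced by the coupling and $P_n$ is the transition kernel of $X$. A Karlin-McGregor type formula for $P_n$ then delivers a determinantal expression, and the identity
\[
Q_n(y,y')=\sum_{x':\Phi(x')=y'}P_n(x_0,x'),
\]
valid for any fixed $x_0\in\Phi^{-1}(y)$, reduces the computation of $Q_n$ to summing this determinant over the internal coordinates of the array $x'$.

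The main obstacle is precisely this summation, which is nontrivial because the row walkers carry distinct parameters $p_1,\ldots,p_N$: one cannot simply apply Cauchy-Binet with a single scalar kernel. The appropriate bookkeeping is exactly what the superscript $(ij)$ on $h_r^{(ij)}$ and $e_r^{(ij)}$ encodes: after summing out the interior of the array, the $(i,j)$-entry of the final $N\times N$ determinant should retain only the parameters $p_{i+1},\ldots,p_j$. The generating function identities
\[
\sum_{r\ge 0} h_r^{(ij)}(\alpha)z^r=\prod_{k=i+1}^{j}(1-\alpha_k z)^{-1},\qquad \sum_{r\ge 0} e_r^{(ij)}(\alpha)z^r=\prod_{k=i+1}^{j}(1+\alpha_k z),
\]
combined with a telescoping Cauchy-Binet argument, should collapse the interior sums into the claimed $w_{n,\alpha}^{(ij)}$, $\hat w_{n,\alpha}^{(ij)}$, $v_{n,\pi}^{(ij)}$, or $\hat v_{n,\pi^{-1}}^{(ij)}$ entries. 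Meanwhile, the multiplicative prefactor $\prod_k (1-p_k)^n p_k^{y'_k-y_k}$ (respectively its $\pi_k$ counterpart) should emerge as the product of $n$-step marginal row-walker weights between the diagonal entries of $x_0$ and $x'$, and absolute convergence of the infinite series in \eqref{eq:defwij} is automatic since $p_k\in(0,1)$. The cleanest way to organize the telescoping — in particular, matching signs and the shift conventions $y'_i-y_j\pm(i-j)$ in each of the four cases — is where I would expect most of the technical work to lie.
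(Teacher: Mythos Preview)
Your skeleton has the right spirit but misidentifies the process that carries the Karlin--McGregor kernel, and this leads to a genuine gap. In the paper the intertwining is \emph{not} between $Y$ and the full Gelfand--Tsetlin array $X=\mathcal P$; it is between $Y$ and the \emph{shape} process $Z(n)=\mathrm{sh}(\mathcal P(n))\in W^N$, i.e.\ the bottom row only. It is $Z$, not $X$, that is an $N$-tuple of non-colliding walkers with a genuine Karlin--McGregor determinant (obtained from the RSK bijection together with the Jacobi--Trudi identity). By contrast, the $n$-step transition kernel of the full triangular array from a generic initial pattern $x_0$ has no such simple determinantal form, so your formula $Q_n(y,y')=\sum_{x':\Phi(x')=y'}P_n(x_0,x')$ cannot be evaluated as you propose: there is no ``Karlin--McGregor type formula for $P_n$'' available at that level. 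The phrase ``walker $k$ sitting in row $k$'' already signals the conflation, since row $k$ of a GT pattern has $k$ entries, not one.

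Because $Y$ is \emph{not} a deterministic function of $Z$, the paper cannot use your fibre-summation identity either. Instead it shows, again from the RSK bijection, that the conditional law of $Y(n)$ given $(Z(m))_{m\le n}$ depends only on $Z(n)$ and is given by a kernel $K_\alpha$ (or $\hat K_\alpha$) that counts weighted GT patterns with prescribed shape and prescribed edge. This yields the intertwining $P_nK_\alpha=K_\alpha Q_n$ with $P_n$ the Karlin--McGregor kernel of $Z$. The decisive technical step---absent from your plan---is then to prove that $K_\alpha$ is \emph{invertible}: one rewrites it (via a Lindstr\"om--Gessel--Viennot path argument) as a determinant $\Lambda(z,y)=\det\{h^{(jN)}_{z_i-y_j-i+j}(\alpha)\}$, exhibits an explicit two-sided inverse $\Pi(y,z)=\det\{(-1)^{y_i-z_j-i+j}e^{(iN)}_{y_i-z_j-i+j}(\alpha)\}$, and only then computes $Q_n=\Pi P_n\Lambda$ by two applications of Cauchy--Binet together with the identity $\sum_r(-1)^r e_r^{(iN)}h^{(jN)}_{n-r}=h^{(ji)}_n$ or $(-1)^n e^{(ij)}_n$. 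This is exactly where the $(ij)$-dependence of the final entries, and the split into the $j\le i$ and $i\le j$ cases in \eqref{eq:defwij}, comes from. Your telescoping idea is pointed in the right direction, but it has to be run on the triple product $\Pi P_n\Lambda$ of $N\times N$ determinants, not on a sum over interior array coordinates.
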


The remainder of this paper is devoted to a proof of this theorem.

\section{The RSK correspondence and its variants}
\label{sec:RSK}
In each of the four cases considered in the previous section, the
Markov process of interest $\bigl(Y(n); n \in{\mathbf N} \bigr)$ is constructed
from a family $\bigl( \xi(k,n); k\in \{1,2,\ldots, N\}, n \in
{\mathbf N} \bigr)$ of random innovations.
In this section we will construct a second Markov process $Z$ from
the same innovations data, using the RSK algorithm or one of its variants.
In each case, we will be able to find the (Karlin-McGregor type) transition semigroup of $Z$ and
show that it is intertwined with the transition semigroup of $Y$.

We give some definitions in order to describe the RSK type algorithms in the form we need.
A partition $\lambda$ with $k$ parts is an integer vector $\lambda_1,\ldots,\lambda_k$
satisfying $\lambda_1\ge \lambda_2\ge \ldots\ge \lambda_k$.
Consider an array of strictly positive integers $T=\bigl( T_{ij}; 1\le i\le k, 1\le j \le \lambda_i \bigr)$ 
of shape $\lambda$ satisfying $T_{ij}\le T_{i+1,j}$ and $T_{ij}\le T_{i,j+1}$ (interpret $T_{ij}$ as infinity if it is undefined). 
We write $T\in {\mathbf T}_N^{n,\wedge}$ if the integers in $T$ do not exceed $n$ and increase {\em strictly} down the columns, while $\lambda$ consists of at most $N$ parts. In the terminology of enumerative combinatorics, ${\mathbf T}_N^{n,\wedge}$ consists of semi-standard Young tableaux (SSYT) with at most $N$ rows and content $\{1,\ldots,n\}$.
Similarly, we write $T\in {\mathbf T}_N^{n,<}$ if the integers in $T$ do not exceed $n$ and increase strictly along the rows, while $\lambda$ consists of at most $N$ parts. We write $\sh(T)$ for the shape of $T$, which we consider to be an element of $W^N$ by padding the vector with zeros if necessary ($N$ is fixed throughout).

We study four different ways to associate a ${\mathbf T}_N^{N,\wedge}$-valued 
process $\bigl( \P(n); n\ge 0\bigr)$ to the data $\bigl( \xi(k,n); k\in \{1,\ldots,N\}, n\in {\mathbf N} \bigr)$, each corresponding
to a different variant of RSK.
More details on the different variants can for instance be found in \cite{fulton:youngtableaux1997,gansner:correspondence1981}.
\iffalse
These four variants arise since
two types of two-line arrays can be associated to the innovations data, and there are
two ways to construct a SSYT-valued process $\bigl( \P(n); n\ge 0\bigr)$ from these two-line arrays.
\fi

We begin by noting two methods for constructing a two-line array of the form
\[
\left(\begin{array}{cccc}
a_1 & a_2 & a_3 & \cdots \\
b_1 & b_2 & b_3 & \cdots
\end{array}\right)
\]
from the innovation data, where the $a_i$ are nondecreasing. 
Both methods have the property that the column $\binom a b$ appears $\xi(b, a)$ times in the array.
The first method, {\em lexicographic array-construction}, requires that $b_i\le b_{i+1}$ if $a_i=a_{i+1}$.
The second method requires that $b_i\ge b_{i+1}$ if $a_i=a_{i+1}$, and we therefore call it the {\em anti-lexicographic array-construction}.

Next we describe two methods for constructing a sequence $\bigl(\P(n); n\ge 0\bigr)$ of SSYT in ${\mathbf T}_N^{N,\wedge}$
from the given two-line array. Both constructions are inductive, and start with an empty SSYT $\P(0)$.
Given $\P(n)$, the SSYT $\P(n+1)$ is found by inserting the elements $b_i$ for which $a_i=n+1$.
If there are $M$ such elements, the methods construct a sequence $\P^1(n),\ldots,\P^M(n)$ such that $\P^1(n)=\P(n)$ and
$\P(n+1)=\P^M(n)$.
The SSYT $\P^{i+1}(n)$ is constructed from $\P^{i}(n)$ by
inserting the next unused $b$-element of the two-line array.
The first method, {\em row insertion}, inserts an element $b$ into $\P^{i+1}(n)$ using the following rules:
\begin{itemize}
\item If every entry in the first row of $\P^i(n)$ is {\em smaller than or equal} to $b$, then $b$ is appended to the
end of the row.
\item Otherwise, $b$ is used to replace the leftmost entry in the row
which is {\em strictly larger} than $b$.
\end{itemize}
The entry replaced is inserted in the same manner into the second row,
and this process continues until an entry is either placed at the end of a row or it is placed in the first position of an empty row.
The second method, {\em column insertion}, is a modification of the above procedure. Instead of inserting entries along the rows,
the entries are now inserted down the columns.
Now the following rules are followed:
\begin{itemize}
\item If every entry in the column is {\em strictly smaller} than $b$, then $b$ is appended to the end of the column.
\item Otherwise, $b$ is used to replace the uppermost entry in the column which is {\em greater than or equal} to $b$.
\end{itemize}

Four combinations of $\xi$-values, array constructions, and insertion algorithms are of special interest,
as there is a combinatorial correspondence underlying the above construction of the process $\P$.
To explain this, let $\mathcal Q(n)$ be the unique array of integers
for which the entries $1,\ldots,m$ form an array with the same shape as $\mathcal P(m)$
for $m\le n$.
Depending on the RSK variant chosen, the pair $(\mathcal P(n),\mathcal Q(n))$ belongs either to
\begin{equation}
\label{eq:range1}
\{(S,T)\in {\mathbf T}_N^{N,\wedge}\times {\mathbf T}_N^{n,\wedge}: \sh(S)=\sh(T)\}
\end{equation}
or to 
\begin{equation}
\label{eq:range2}
\{(S,T)\in {\mathbf T}_N^{N,\wedge}\times {\mathbf T}_N^{n,<}: \sh(S)=\sh(T)\}.
\end{equation}
The four combinatorial correspondences map the sequence
$\bigl( \xi(k,m); k\in \{1,2,\ldots, N\}, m \in
\{1,2,\ldots,n\} \bigr)$ {\em bijectively} to $(\mathcal P(n),\mathcal Q(n))$ for any $n\ge 1$.
We present these four combinations in the next table, along with the name under which the resulting
bijection (correspondence) is known.

\begin{table}[h]
\begin{tabular}{|l|c|c|c|c|}
\hline
\textbf{Correspondence} & RSK & dual RSK & Burge & dual Burge \\
\hline
\textbf{Innovations data} & $0,1,2,\ldots$ &$0,1$ &$0,1,2,\ldots$ &$0,1$\\
\hline
\textbf{Range} \textbf{$(\P(n),\Q(n))$} & (\ref{eq:range1}) & (\ref{eq:range2}) & (\ref{eq:range1}) & (\ref{eq:range2})\\
\hline
\textbf{Insertion algorithm} & row & column & column & row \\
\hline
\textbf{Array construction} & lexicographic & lexicographic & anti-lexicographic & anti-lexicographic \\
\hline
\end{tabular}
\end{table}

It is our next aim to relate the four correspondences to the interacting-particle framework
of Section~\ref{sec:interacting}.
Further analysis is facilitated by
a second process $Z=\bigl(Z(n);n\ge 0\bigr)$ arising from the process $\P$.
We define this process by letting $Z(n)$ be the shape of $\P(n)$, so $Z$ takes values in $W^N$.
After giving the transition kernel of $Z$, we prove that the transition semigroup of $Y$
is intertwined with the transition semigroup of $Z$.

The semi-standard Young tableaux in ${\mathbf T}_N^{N,\wedge}$ play an important role in our analysis
since the process $\P$ is ${\mathbf T}_N^{N,\wedge}$-valued, and we need some further definitions for such tableaux.
For $T\in {\mathbf T}_N^{N,\wedge}$, we let
$\mathrm{ledge}(T)$ be the $N$-vector for which element $i$ is the number of $i$'s in row $i$ (the terminology `ledge' is motivated in the next section). This vector may contain zeros and we always have $\mathrm{ledge}(T)\in W^N$.
Similarly, we let $\mathrm{redge}(T)$ be the $N$-vector for which the $i$-th entry is the number of 
elements in the first row that do not exceed $i$; we always have $\mathrm{redge}(T)\in \hat W^N$.
For a vector $\alpha=(\alpha_1,\alpha_2, \ldots, \alpha_N)$ of weights,
we define the weight $\alpha^{T}$ of $T\in {\mathbf T}_N^{N,\wedge}$ by
\[
\alpha^{T}= \prod_{i=1}^N \alpha_i^{n_i(T)},
\]
where $n_i(T)$ is the number of $i$'s in $T$. This is the usual definition for the weight of a SSYT.

\medskip
{\bf CASE A: Geometric jumps with pushing.}
Consider the RSK algorithm, i.e., lexicographic array construction and row insertion.
The row-insertion algorithm shows that the vector-valued process $\bigl(\mathrm {redge} (\P(n)); n\ge 0\bigr)$
is {\em exactly the same} as the process $\bigl( Y(n);n\ge 0\bigr)$.
Note also that $Y$ is Markov relative to the filtration of the Markov process $\P$.

The dynamics of the RSK algorithm show that $\mathcal Q(n)$ is a SSYT.
Therefore, using the bijective property of RSK we obtain for $S\in {\mathbf T}_N^{N,\wedge}, 
T\in {\mathbf T}_N^{n,\wedge}$,
\begin{equation}
\label{eq:probbijection}
\mathbf P(\mathcal P(n)=S, \mathcal Q(n)=T)= \prod_{k=1}^N \left[1-p_k\right]^n p^S\,
\mathbf 1(\mathrm{sh} (S)=\mathrm{sh} (T)).
\end{equation}
%where we interpret $\mathcal P(n)$ and $\mathcal Q(n)$ as elements of $\mathbf K^N_+$ and $\mathbf K^n_+$ respectively. 
Using the fact that $\mathcal Q(n)$ encodes the shapes $\bigl(Z(m); m\le n\bigr)$, 
as argued in \cite[Sec.~3.2]{oconnell:conditionedRSK2003} we can use (\ref{eq:probbijection}) to compute the law of
the shape process $Z$. 
Indeed, summing (\ref{eq:probbijection}) over appropriate $S\in {\mathbf T}_N^{N,\wedge}, 
T\in {\mathbf T}_N^{n,\wedge}$ we obtain for $m\le n$,
\[
\mathbf P(Z(n)=z(n),Z(m)=z(m),\ldots,Z(1)=z(1))= \prod_{k=1}^N \left[1-p_k\right]^n s_{z(n)}(p) g^{n-m}_{z(n)/z(m)},
\]
provided the left-hand side is nonzero.
Here $s_z(p)=\sum_{T\in {\mathbf T}_N^{N,\wedge};\, \mathrm{sh} (T)=z} p^T$ 
is a symmetric function (in $p$) known as a Schur polynomial. Also, $g^k_{\lambda/\mu}$ is the number of skew SSYT
with shape $\lambda/\mu$ and entries from $\{1,\ldots, k\}$ \cite[Sec.~7.10]{stanley:ec2}. 
% which is array of integers for which the column index in row $i$ must satisfy $\mu_i<j\le \lambda_i$ while the array entries are required to have the same monotonicity properties as for a SSYT.
The Jacobi-Trudi identity~\cite[Thm.~7.16.1]{stanley:ec2} implies
\[
g^k_{\lambda/\mu}=\det\left\{w_k(\lambda_i-\mu_j-i+j)\right\},
\]
and we thus find that $Z$ is a Markov chain on $W^N$ with transition kernel given by
\[
P_n(z,z')=
\prod_{k=1}^N \left[1-p_k\right]^n \frac{s_{z'}(p)}{s_z(p)} \det\left\{w_n(z'_i-z_j-i+j)\right\}.
\]
A similar reasoning, now summing (\ref{eq:probbijection}) over $S\in{\mathbf T}_N^{N,\wedge}$ with $\mathrm{redge}(S)=y\in\hat W^N$ and $\mathrm{sh}(S)=Z(n)$, shows that
\[
{\mathbf P}(Y(n)=y| Z(m),m\le n)= \hat K_p(Z(n), y),
\]
where
\[
\hat K_\alpha(z,y)= \frac 1{s_z(\alpha)}
\sum_{T\in\mathbf{T}_N^{N,\wedge}; \,\mathrm{sh}(T)=z, \,
\mathrm{redge}(T)=y} \alpha^{T}.
\]
Note that ${\mathbf P}(Y(n)=y| Z(m),m\le n)$ depends on $\bigl(Z(m);m\le n\bigr)$ only through $Z(n)$.
This yields for $y\in \hat W^N$ and $m\le n$,
\begin{eqnarray*}
{\mathbf P}(Y(n)=y| Z(k),k\le m) &=& {\mathbf P}\left[\left. {\mathbf P}(Y(n)=y| Z(k),k\le n) \right|Z(k),k\le m\right]\\
&=& {\mathbf P}\left[\left. \hat K_p(Z(n),y)  \right|Z(k),k\le m\right]\\
&=&  \sum_{z\in W^N} P_{n-m}(Z(m),z)\hat K_p(z,y).
\end{eqnarray*}
On the other hand, the left-hand side can be written as
\begin{eqnarray*}
{\mathbf P}(Y(n)=y| Z(k),k\le m) &=& {\mathbf P}\left[\left. {\mathbf P}(Y(n)=y| \P(k),k\le m) \right|Z(k),k\le m\right]\\
&=& {\mathbf P}\left[\left. Q_{n-m}(Y(m),y) \right|Z(k),k\le m\right]\\
&=& \sum_{y'\in \hat W^N} \hat K_p(Z(m),y')  Q_{n-m}(y',y).
\end{eqnarray*}
On combining these two displays, we deduce
the intertwining relationship $P_n \hat K_p=\hat K_p Q_n$,
where the product $P\hat K$ of the kernel $P$ with domain $W^N\times W^N$ and the kernel
$\hat K$ with domain $W^N\times \hat W^N$ is defined as
$P\hat K(z,y)=\sum_{z'\in W^N} P(z,z')\hat K(z',y)$.
%In view of the similarity with a matrix product, we
%slightly abuse terminology and simply call $P$
%a $W^N\times W^N$ matrix and $K$ a $W^N\times \hat W^N$ matrix.
The next section investigates the intertwining relationship in detail
to find the kernel $Q_n$.

\medskip
{\bf CASE B: Bernoulli jumps with blocking.}
Under lexicographic array construction and column insertion,
$\bigl(\mathrm{ledge} (\P(n)); n\ge 0\bigr)$
is exactly the same as the process $\bigl( Y(n);n\ge 0\bigr)$.
%Given a tableau $T$, we write $T'$ for the conjugate array formed by interchanging the rows and columns of $T$,
%thereby also defining $\mathbf x'$ given some $\mathbf x\in \mathbf K^N_+$.
%The array of integers $\mathcal Q(n)'$ is a SSYT under the dual RSK dynamics,
The bijective property of the dual RSK shows that for $S\in \mathbf T^{N,\wedge}_N,T\in\mathbf T^{n,<}_N$, 
\[
\mathbf P(\mathcal P(n)=S, \mathcal Q(n)=T)= \prod_{k=1}^N \left[1-p_k\right]^n \pi^{S}\,
\mathbf 1(\mathrm{sh} (S)=\mathrm{sh} (T)).
\]
As in \cite[Sec.~3.3]{oconnell:conditionedRSK2003}, on combining this
with the dual Jacobi-Trudi identity, we find that $Z$ is a Markov chain on $W^N$ with transition kernel
\[
P_n(z,z')=
\prod_{k=1}^N \left[1-p_k\right]^n \frac{s_{z'}(\pi)}{s_z(\pi)} \det\left\{v_n(z'_i-z_j-i+j)\right\}.
\]
After setting
\[
K_\alpha(z,y)= \frac 1{s_z(\alpha)}
\sum_{T\in\mathbf{T}^{N,\wedge}; \,\mathrm{sh}(T)=z, \, \mathrm{ledge}(T)=y} \alpha^{T},
\]
we may derive the intertwining $P_n K_\pi=K_\pi Q_n$ along the lines of case A.

\medskip
{\bf CASE C: Geometric jumps with blocking.}
The dynamics of the column insertion algorithm show that $\bigl(\mathrm {ledge} (\P(n)); n\ge 0\bigr)$
is exactly the same as the process $\bigl( Y(n);n\ge 0\bigr)$.
Moreover, since the law of the process $Z$ is invariant under the
choice of the insertion algorithm, $Z$ is Markovian with the same kernel as in case A.
The intertwining is $P_nK_p=K_p Q_n$.

\medskip
{\bf CASE D: Bernoulli jumps with pushing.}
Now $\bigl(\mathrm {redge} (\P(n)); n\ge 0\bigr)$
is exactly the same as the process $\bigl( Y(n);n\ge 0\bigr)$.
The process $Z$ is Markovian with the same kernel as in case B, and
we have the intertwining $P_n\hat K_\pi=\hat K_\pi Q_n$.

\section{Determinantal intertwining kernels}
\label{sec:determinant}
It is the aim of this section to show how the kernel $Q_n$ of $Y$ can be recovered
from any of the intertwining identities $P_n\hat K_\alpha=\hat K_\alpha Q_n$ and
$P_nK_\alpha=K_\alpha Q_n$,
where $P_n$ is the (known) Karlin-McGregor type kernel of the process $Z$.
In fact, we prove the stronger assertion that the
intertwining kernels $K_\alpha$ and $\hat K_\alpha$ are invertible.

In what follows, it is convenient to embed ${\mathbf T}_N^{N,\wedge}$ in a space parametrized by
an array of variables
$\mathbf{x}=(x^1,\ldots,x^N)$ with $x^k=(x_1^k,x_2^k,\ldots,x_k^k)\in {\mathbf Z}^k$,
such that the coordinates satisfy the inequalities
\[
x^k_k\le x_{k-1}^{k-1}\leq x^k_{k-1}\leq x^{k-1}_{k-2}
\le \ldots\leq x_2^k\leq x_1^{k-1}\leq x_1^k
\]
for $k=2,\ldots,N$. The pattern $\mathbf x$ corresponding to a given SSYT in ${\mathbf T}_N^{N,\wedge}$
is found by letting $x_i^j$
mark the position of the last entry in row $i$ whose label does not
exceed $j$.
We write $\mathbf{K}^N$ for the set of all $\mathbf x$ satisfying the above constraint,
and say that any $\mathbf x \in \mathbf{K}^N$ is a {\em Gelfand-Tsetlin (GT) pattern}.
In contrast to the tableau setting, it is not required that $\mathbf x$
has non-negative entries. For $\mathbf{x}\in\mathbf{K}^N$, we set
$\mathrm{sh}(\mathbf{x})=(x^N_1,x^N_2, \ldots, x_N^N)$, 
$\mathrm{ledge}(\mathbf{ x})= (x^1_1,\ldots,x^N_N)$, and
$\mathrm{redge}(\mathbf{x})=(x_1^1,\ldots,x_1^N)$; these definitions are consistent with 
those given for a SSYT in ${\mathbf T}_N^{N,\wedge}$.
For a vector $\alpha=(\alpha_1,\alpha_2, \ldots, \alpha_N)$ of weights,
we define the weight $\alpha^{\mathbf{x}}$ of a GT pattern $\mathbf{x}$ by
\[
\alpha^{\mathbf{x}}= \alpha_1^{x^1_1} \prod_{k=2}^N \alpha_k^{ \sum
x^k_i- \sum x^{k-1}_i},
\]
in accordance with our previous definition for tableaux in ${\mathbf T}_N^{N,\wedge}$.

Instead of studying $K_\alpha$ and $\hat K_\alpha$, it is equivalent but more convenient
to work with modified versions which do not involve Schur functions but
have a polynomial prefactor.
It is natural to start with the kernel $K_\alpha$,
since this is a `square' $W^N\times W^N$ matrix.
We define, suppressing the dependence on $\alpha$,
\begin{equation}
\label{deflambda}
\Lambda( z, y )=\alpha_1^{-y_1}\cdots \alpha_N^{-y_N}\sum_{{\mathbf x}\in\mathbf{K}^N; \,\mathrm{sh}(\mathbf{x})=z, \, \mathrm{ledge}({\mathbf x})=y} \alpha^{\mathbf{x}}.
\end{equation}
We first show that $\Lambda(z,y)$ can be written as a determinant.
\begin{proposition}
For $y,z\in W^N$, we have
\[
\Lambda(z,y)= \mathrm{det} \bigl\{ h_{z_i-y_j-i+j}^{(jN)}(\alpha) \bigr\}.
\]
\end{proposition}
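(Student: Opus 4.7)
The proof strategy is to interpret $\Lambda(z,y)$ as the weighted count of systems of non-intersecting lattice paths, then invoke the Lindström-Gessel-Viennot (LGV) lemma.

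The first step is to rewrite the weight so that the contributions of the $N$ particles decouple. Using $x^k_k=y_k$, one checks by telescoping that
\[
\alpha^{\mathbf{x}}=\alpha^{y}\prod_{i=1}^{N-1}\prod_{k=i+1}^{N}\alpha_k^{x^k_i-x^{k-1}_i},
\]
so $\Lambda(z,y)$ equals the sum, over GT patterns with $\mathrm{sh}(\mathbf{x})=z$ and $\mathrm{ledge}(\mathbf{x})=y$, of $\prod_{i=1}^{N-1}\prod_{k=i+1}^{N}\alpha_k^{x^k_i-x^{k-1}_i}$; the $i$-th factor depends only on the trajectory $(x^k_i)_{k\ge i}$ of particle $i$. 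The $i=N$ factor is empty, and the conditions $\mathrm{sh}=z$ and $\mathrm{ledge}=y$ simultaneously pin $x^N_N$ to $z_N$ and $y_N$, enforcing the trivial constraint $z_N=y_N$.

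Next, I would set up the LGV geometry. Consider lattice paths on $\{1,\ldots,N\}\times{\mathbf Z}$ with unit right-steps of weight $1$ and unit up-steps at time $k$ of weight $\alpha_k$. For $j=1,\ldots,N$, place a source $A_j=(j,y_j-j)$ and a sink $B_j=(N,z_j-j)$, declaring that any path leaving $A_j$ must begin with a right-step. Then the shifted trajectory $(x^k_j-j)_{k\ge j}$ of particle $j$ corresponds bijectively to such a path from $A_j$ to $B_j$, with weight $\prod_{k=j+1}^N\alpha_k^{x^k_j-x^{k-1}_j}$ matching the $i=j$ factor above. The key observation is that the GT interlacing is equivalent to vertex-disjointness of the path system: at time $k$, particle $j$'s path passes through every integer height in the closed interval $[x^{k-1}_j-j,\,x^k_j-j]$, and this interval is disjoint from the corresponding interval of particle $j+1$ exactly when $x^k_{j+1}-(j+1)<x^{k-1}_j-j$, i.e.\ precisely the interlacing $x^k_{j+1}\le x^{k-1}_j$.

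Finally, the weighted count of \emph{single} paths from $A_j$ to $B_i$ is obtained by summing $\prod_k\alpha_k^{u_k}$ over $u_{j+1},\ldots,u_N\ge 0$ with $\sum_k u_k=(z_i-i)-(y_j-j)$, yielding exactly $h^{(jN)}_{z_i-y_j-i+j}(\alpha)$; the degenerate case $j=N$ correctly produces ${\mathbf 1}(z_i-i=y_N-N)$ in accordance with $h^{(NN)}_r={\mathbf 1}(r=0)$. Because the source heights $y_j-j$ and sink heights $z_j-j$ are both strictly decreasing in $j$, paths realising any non-identity permutation must cross, so the standard LGV sign-reversing involution applies and delivers $\Lambda(z,y)=\det\{h^{(jN)}_{z_i-y_j-i+j}(\alpha)\}$. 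The main technical point is the interlacing-to-disjointness equivalence, for which one must track not merely the endpoints but every lattice vertex traversed during the consecutive up-steps at each time.
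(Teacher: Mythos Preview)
Your proof is correct and follows essentially the same route as the paper's: encode the GT patterns contributing to $\Lambda(z,y)$ as families of non-intersecting lattice paths with weights $\alpha_r$ on the steps at level $r$, then apply the Lindstr\"om--Gessel--Viennot lemma to obtain the determinant of single-path weights $h^{(jN)}_{z_i-y_j-i+j}(\alpha)$. The only cosmetic differences are that the paper swaps the roles of the two coordinates, uses $N-1$ paths (handling the $N$th row by padding the determinant via $h^{(NN)}_r=\mathbf 1(r=0)$), and shifts the sources to height $k+1$ rather than imposing your ``first step must be rightward'' rule; these are equivalent bookkeeping choices.
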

\begin{proof}
The proof is a variant of a well-known argument using
non-intersecting lattice paths to derive the Jacobi-Trudi formulae,
see for example \cite{stanley:ec2}. Fix $z$ and $y$ belonging to
$W^N$ with $z_N=y_N$. Each Gelfand-Tsetlin pattern ${\mathbf
x}\in\mathbf{K}^N $ having $\mathrm{sh}(\mathbf{x})=z$ and
$\mathrm{ledge}({\mathbf x})=y$ can be encoded as a non-intersecting
$(N-1)$-tuple of paths $\bigl(P_1,P_2, \ldots, P_{N-1}\bigr)$  on
the edges of the square lattice with vertex set ${\mathbf Z}^2$.
Paths always traverse edges in the direction of increasing
co-ordinates: either `upwards' or `rightwards'. The path $P_k$
begins at the vertex $(y_k-k,k+1)$, ends at $(z_k-k,N)$,  and
contains the `horizontal' edges from $(x_k^{r-1}-k,r)$ to
$(x^r_k-k,r)$, for $r=k+1,k+2, \ldots, N$.  This correspondence
between patterns and paths is  a bijection and consequently we may
write the sum defining  $\Lambda(z,y)$ as
\[
\sum_{(P_1,\ldots, P_{N-1})} w(P_1)\cdots w(P_{N-1}),
\]
where the sum is over all non-intersecting paths which have starting points and end
points given in terms of $y$ and $z$ as above, and where the weight
$w(P_k)$ is defined to be
\[
\prod_{r=k+1}^N \alpha_r^{e(r)},
\]
with $e(r)$ denoting the number of horizontal edges at height $r$
contained in  the path $P_k$. By the Gessel-Viennot formula (see
Theorem 2.7.1 of \cite{stanley:ec1}), this sum for $\Lambda(z,y)$ is
equal to $\det(M)$ where $M$ is an $(N-1) \times (N-1)$ matrix with
$(i,j)$th entry given by $\sum_P w(P)$ where $P$ runs through all
paths  connecting $(y_j-j,j+1)$ to $(z_i-i,N)$. This latter quantity
is easily found to equal $h_{z_i-y_j-i+j}^{(jN)}(\alpha)$. The
proposition follows on noting that
$h_{z_i-y_N-i+N}^{(NN)}(\alpha)=0$ for $i=1,2,3, \ldots, N-1$ and
that $h_{z_N-y_N}^{(NN)}(\alpha)={\mathbf 1}(z_N=y_N)$.
\end{proof}

We will use the following identity that is easily checked by means of generating functions,
\begin{equation}
\label{heidentity}
\sum_{r\in \mathbf Z} (-1)^r e_r^{(iN)}(\alpha) h^{(jN)}_{n-r}(\alpha)=
\begin{cases}
h^{(ji)}_n(\alpha)    &  \text{ if } j \leq i, \\
(-1)^n e^{(ij)}_n(\alpha) & \text{ if } i\leq j.
\end{cases}
\end{equation}
Recall also the Cauchy-Binet formula
\begin{equation}
\sum_{z \in W^N} \mathrm{det} \left\{ \phi_i(z_j-j)\right\} \mathrm{det} \left\{ \psi_j(z_i-i)\right\} = \mathrm{det} \left\{  \sum_{z \in {\mathbf Z}} \phi_i(z)\psi_j(z)\right\}.
\end{equation}
\begin{proposition}
The $W^N\times W^N$ matrix $\Lambda$ is invertible. Its inverse is given by
the $W^N \times W^N$ matrix $\Pi$ defined as
\[
\Pi(y,z)= \mathrm{det} \bigl\{ (-1)^{y_i-z_j-i+j}e^{(iN)}_{y_i-z_j-i+j}(\alpha)\bigr\}.
\]
\end{proposition}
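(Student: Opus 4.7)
The plan is to prove $\Pi \Lambda = I$ on $W^N \times W^N$; the companion identity $\Lambda \Pi = I$ follows by an analogous calculation. To set up Cauchy-Binet, I write the $(i,j)$-entry of $\Pi(y, z')$ as $\phi_i(z'_j - j)$ with $\phi_i(m) := (-1)^{y_i - m - i}\, e^{(iN)}_{y_i - m - i}(\alpha)$, and the $(i,j)$-entry of $\Lambda(z', z)$ as $\psi_j(z'_i - i)$ with $\psi_j(m) := h^{(jN)}_{m - z_j + j}(\alpha)$. The Cauchy-Binet formula then yields
\[
(\Pi \Lambda)(y, z) = \det\{C_{ij}\}, \qquad C_{ij} = \sum_{m \in \mathbf{Z}} \phi_i(m)\,\psi_j(m).
\]

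Substituting $r = y_i - m - i$ identifies the inner sum with $\sum_r (-1)^r e^{(iN)}_r(\alpha)\, h^{(jN)}_{n-r}(\alpha)$ where $n := y_i - z_j - i + j$, so identity (\ref{heidentity}) applies verbatim and gives
\[
C_{ij} = \begin{cases}
  h^{(ji)}_{y_i - z_j - i + j}(\alpha), & j \le i,\\
  (-1)^{y_i - z_j - i + j}\, e^{(ij)}_{y_i - z_j - i + j}(\alpha), & i \le j,
\end{cases}
\]
whose diagonal is $C_{ii} = \mathbf{1}(y_i = z_i)$.

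It remains to prove $\det\{C_{ij}\} = \mathbf{1}(y = z)$, and this is the step I expect to be the main obstacle. When $y = z$ the argument is quick: for $i > j$ the subscript of $C_{ij} = h^{(ji)}_{y_i - y_j - i + j}$ is strictly negative (since $y \in W^N$ is weakly decreasing and $j - i < 0$), so the below-diagonal entries vanish, $C$ is upper triangular with unit diagonal, and $\det C = 1$. For $y \ne z$, I would unify the piecewise formula into the single coefficient extraction $C_{ij} = [t^{y_i - z_j - i + j}]\, E_{iN}(t)\, H_{jN}(t)$ with $H_{jN}(t) := \prod_{k=j+1}^N (1-t\alpha_k)^{-1}$ and $E_{iN}(t) := \prod_{k=i+1}^N (1-t\alpha_k)$ (these recover the two cases because $E_{iN}H_{jN}$ simplifies to $\prod_{k=i+1}^j(1-t\alpha_k)$ for $i\le j$ and to $\prod_{k=j+1}^i(1-t\alpha_k)^{-1}$ for $i\ge j$). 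Expanding $\det\{C_{ij}\}$ by the Leibniz formula with an independent dummy variable $t_i$ for each row and collecting the permutation sum then yields
\[
\det\{C_{ij}\} = \Bigl[\prod_i t_i^{y_i - i}\Bigr]\, \prod_i E_{iN}(t_i)\cdot \det\{t_i^{z_j - j}\, H_{jN}(t_i)\}_{ij}.
\]
The proof concludes by evaluating the inner determinant as a rational function of the $t_i$'s (its denominator is a product of factors $1-t_i\alpha_k$) and showing that, after multiplication by $\prod_i E_{iN}(t_i)$ and extraction of the monomial coefficient $\prod_i t_i^{y_i - i}$, what remains is exactly $\mathbf{1}(y = z)$; the most delicate point here is exhibiting the required cancellations among the generalized Vandermonde terms produced by the multilinear expansion of the inner determinant.
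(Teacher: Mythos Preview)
Your setup via Cauchy--Binet and identity~\eqref{heidentity} is exactly what the paper does, and your formula for $C_{ij}$ matches the paper's $m^{(ij)}_{y_i-y'_j-i+j}$. The divergence comes at the step you yourself flag as ``the main obstacle'': showing $\det\{C_{ij}\}=\mathbf 1(y=z)$.

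The paper does \emph{not} split into cases $y=z$ versus $y\neq z$, nor does it use generating functions. Instead it gives a short combinatorial argument that \emph{every} non-identity permutation contributes zero to the Leibniz expansion, regardless of $y$ and $y'$. The observation is: for any permutation $\pi\neq\mathrm{id}$ one can find indices $i_1<i_2$ with $\pi(i_1)>i_1$, $\pi(i_2)<i_2$, and $\pi(i_1)>\pi(i_2)$. The entry $C_{i_1,\pi(i_1)}$ (an $e$-term) vanishes when $y_{i_1}>y'_{\pi(i_1)}$, and the entry $C_{i_2,\pi(i_2)}$ (an $h$-term) vanishes when $y_{i_2}\le y'_{\pi(i_2)}$. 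Since $y_{i_1}\ge y_{i_2}$ and $y'_{\pi(i_1)}\le y'_{\pi(i_2)}$, at least one of these inequalities must hold, so the product over $i$ is zero. Hence the determinant reduces to the diagonal product $\prod_i\mathbf 1(y_i=y'_i)$, and you are done in two lines. Your generating-function route, by contrast, leaves the hardest part---the evaluation of $\det\{t_i^{z_j-j}H_{jN}(t_i)\}$ and the subsequent coefficient extraction---entirely open; ``exhibiting the required cancellations'' is precisely the content of the proposition, so as written this is a genuine gap rather than a routine step.

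A second, smaller issue: you assert that $\Lambda\Pi=I$ ``follows by an analogous calculation''. Because $W^N$ is infinite, a left inverse need not be a right inverse, and the paper in fact uses a \emph{different} argument for this direction (expanding $\Pi(y,z')$ as a finite sum over shifts $\ell_1,\ldots,\ell_N$ and summing against $\Lambda$ explicitly). You should not expect the same Cauchy--Binet manipulation to go through symmetrically, since the roles of $h^{(jN)}$ and $e^{(iN)}$ in the two matrices are not interchangeable.
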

\begin{proof}
We first show that $\Pi$ is a left inverse of $\Lambda$.
From the Cauchy-Binet formula we deduce that
\begin{equation}
\label{identity}
\Pi \Lambda (y,y^\prime)= \mathrm{det} \bigl\{ m^{(ij)}_{y_i-y^\prime_j-i+j} \bigr\},
\end{equation}
where
\[
m^{(ij)}_n= \begin{cases}
  h^{(ji)}_{n} & \text{ if  }j \leq i, \\
  (-1)^n e^{(ij)}_n & \text{ if }  i\leq j.
\end{cases}
\]
Observe that $m^{(ij)}_r$ is zero if either $i \leq j$ and $r>j-i$ or if $i \geq j$ and $r<0$.
In particular the product of the diagonal elements appearing in the determinant on the
righthandside of \eqref{identity} is $ \prod_i {\mathbf 1}(y_i^\prime=y_i)$.
We need to show that  this is the only contribution to the determinant.
Suppose that $\pi$ is a permutation of $\{1,2,\ldots,N\}$ other than the identity,
and consider the product of the $(i, \pi(i))$th entries.
It easy to see that there must be integers $i_1$ and $i_2$ such that
$i_1<i_2$, $\pi(i_1)>i_1$, $\pi(i_2)<i_2$ and $\pi(i_1)> \pi(i_2)$.
Now the $(i_1, \pi(i_1))$th entry is zero if $y_{i_1}>y^\prime_{\pi(i_1)}$, and the
$(i_2, \pi(i_2))$th entry is zero if  $y_{i_2}\leq y^\prime_{\pi(i_2)}$.
But since $y_{i_1} \geq y_{i_2}$ and $y^\prime_{\pi(i_1)} \leq y^\prime_{\pi(i_2)}$,
at least one of these previous inequalities holds, and hence the product is zero.

The key ingredient for showing that $\Pi$ is a right inverse of $\Lambda$ is the
trivial identity
\[
\Pi(y,z')=
\sum_{\ell_1,\ldots, \ell_N\in \mathbf Z} (-1)^{\ell_1} e_{\ell_1}^{(0N)} \cdots
(-1)^{\ell_N} e_{\ell_N}^{(NN)}\det \left\{{\mathbf 1}(y_i-i=z'_j-j+\ell_i)\right\}.
\]
Observe that the sum over the $\ell$ is finite since the summand is zero unless
$0\le \ell_i\le N-i$ for all $i$.
We therefore have for any function $f$ on $\mathbf Z^{N}$ and $0\le \ell_i\le N-i$,
\begin{eqnarray*}
\lefteqn{\sum_{y\in W^N} f(y_1-1,\ldots,y_N-N)\det \left\{{\mathbf 1}(y_i-i=z'_j-j+\ell_i)\right\}}
\\&=&
\sum_{\sigma\in \mathcal S_{N}} \sgn(\sigma) f(z'_{\sigma(1)}+\ell_1,\ldots,z'_{\sigma(N)}-\sigma(N)+\ell_N)\\
&& \mbox{}
\times\sum_{y\in W^N} \prod_{m=1}^N {\mathbf 1}(y_m-m=z'_{\sigma(m)}-\sigma(m)+\ell_m)\\
&=&f(z'_1-1+\ell_1,\ldots,z'_{N}-N+\ell_N),
\end{eqnarray*}
where $\mathcal S_{N}$ is the set of permutations on $\{1,\ldots,N\}$.
After absorbing the sum over the $\ell$ in the determinant, we obtain
$\Lambda\Pi(z,z')= \mathrm{det} \{ m^{(ij)}_{z_i-z^\prime_j-i+j} \}={\mathbf 1}(z=z')$.
\end{proof}

\medskip
By virtue of this proposition, the intertwining $\Lambda Q= P \Lambda$ yields $Q=\Pi P \Lambda$,
and a straightforward computation using the Cauchy-Binet formula
and \eqref{heidentity} implies the following corollary.
Recall the definition of $f^{(ij)}_\alpha$ in (\ref{eq:defwij}).

\begin{corollary}
\label{cor:QrepLambda}
Suppose that $P$ is a $W^N\times W^N$ matrix of the form
\[
P(z,z^\prime)= \mathrm{det}\left\{ f({z^\prime_i-z_j-i+j}) \right\}
\]
for some function $f$ on $\mathbf Z$.
Suppose that $Q$ is another $W^N\times W^N$ matrix
and that the intertwining relation $P\Lambda = \Lambda Q$ holds. Then we have
for $y,y'\in W^N$,
\[
Q(y,y^\prime)=  \mathrm{det} \left\{ f^{(ij)}_\alpha({y^\prime_i-y_j-i+j}) \right\}.
\]
\end{corollary}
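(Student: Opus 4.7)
The plan is to use the invertibility of $\Lambda$ (with inverse $\Pi$) established in the preceding proposition, which converts the intertwining $P\Lambda=\Lambda Q$ into the explicit formula $Q=\Pi P\Lambda$, and then to evaluate this triple product through two successive applications of the Cauchy-Binet formula together with the algebraic identity (\ref{heidentity}).

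First I would compute $\Pi P(y,z')$. Each factor is a single determinant, and replacing $P$ by its transpose $\det\{f(z'_j-z_i-j+i)\}$ (which leaves its value unchanged) puts both into the Cauchy-Binet format: $\Pi(y,z)=\det\{\phi_i(z_j-j)\}$ with $\phi_i(m)=(-1)^{y_i-i-m}e^{(iN)}_{y_i-i-m}(\alpha)$, and $P(z,z')=\det\{\psi_j(z_i-i)\}$ with $\psi_j(n)=f(z'_j-j-n)$. Cauchy-Binet collapses the sum over $z\in W^N$ into a single determinant whose $(i,j)$ entry, after the change of variable $r=y_i-i-n$, reduces to
\[
\sum_{r} (-1)^r e^{(iN)}_r(\alpha)\,f(z'_j-j-y_i+i+r).
\]
Crucially, this entry depends on $j$ only through $z'_j-j$, so the same machinery applies again: pairing with $\Lambda(z',y')=\det\{h^{(jN)}_{z'_i-y'_j-i+j}(\alpha)\}$ and applying Cauchy-Binet a second time collapses the sum over $z'$ into a determinant whose $(i,j)$ entry is the double sum
\[
\sum_{m}\sum_{r}(-1)^r e^{(iN)}_r(\alpha)\,h^{(jN)}_{m-y'_j+j}(\alpha)\,f(m-y_i+i+r).
\]

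Substituting $s=m-y'_j+j$ and then $u=s+r$ isolates the inner convolution $\sum_r(-1)^r e^{(iN)}_r(\alpha)\,h^{(jN)}_{u-r}(\alpha)$, which is precisely the left-hand side of (\ref{heidentity}). Applying that identity and comparing with the definition (\ref{eq:defwij}) identifies the $(i,j)$ entry as $f^{(ji)}_\alpha(y'_j-y_i-j+i)$; a final transposition of the matrix (which preserves the determinant) yields the stated form $\det\{f^{(ij)}_\alpha(y'_i-y_j-i+j)\}$. The main obstacle is purely notational---the index bookkeeping surrounding the two transpositions and the successive changes of variable must be tracked carefully---and the key structural observation that makes the unified statement possible is that the two cases $j\le i$ and $i\le j$ in (\ref{heidentity}) dovetail exactly with the two cases in (\ref{eq:defwij}).
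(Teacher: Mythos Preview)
Your proposal is correct and follows exactly the approach the paper indicates: invoke the invertibility of $\Lambda$ to write $Q=\Pi P\Lambda$, then evaluate the triple product by two applications of Cauchy--Binet and the identity~(\ref{heidentity}). You have simply supplied the bookkeeping the paper summarizes as ``a straightforward computation.''
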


We will also consider a second intertwining kernel that arises by replacing the left edge
of the pattern with the right edge in \eqref{deflambda}: for $z\in W^N$, $y\in\hat W^N$, we set
\[
\hat{\Lambda}( z, y )=\alpha_1^{-y_1} \cdots \alpha_N^{-y_N}
\sum_{{\mathbf x}\in\mathbf{K}^N;\, \mathrm{sh}(\mathbf{x})=z, \, \mathrm{redge}({\mathbf x})=y} \alpha^{\mathbf{x}}.
\]
For  $\mathbf x \in {\mathbf K}^N$, define $\hat{\mathbf x} \in {\mathbf K}^N$ by $\hat{x}^k_i=-x^k_{k-i+1}$.
The correspondence $\mathbf x \mapsto \hat{\mathbf x}$ is bijective and
it is easily verified that $\alpha^{\hat{\mathbf x}}= \beta^{\mathbf x} $ where
$\beta=\alpha^{-1}=(\alpha_1^{-1}, \ldots, \alpha_N^{-1})$.
Using this correspondence and our results for $\Lambda$ we obtain the following.
\begin{proposition} For $z\in W^N$ and $y\in \hat W^N$, we have
\[
\hat{\Lambda}( z, y )= \mathrm{det} \bigl\{ h_{y_j-z_{N-i+1}-i+j}^{(jN)}(\alpha^{-1}) \bigr\}.
\]
Moreover, $\hat{\Lambda}$ is invertible with inverse $\hat \Pi$ given by, for $y\in \hat W^N$ and $z\in W^N$,
\[
\hat{\Pi}(y,z)= \mathrm{det} \bigl\{ (-1)^{z_{N-j+1}-y_i-i+j}e^{(iN)}_{z_{N-j+1}-y_i-i+j}(\alpha^{-1})\bigr\}.
\]
\end{proposition}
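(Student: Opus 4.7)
The plan is to reduce the assertion about $\hat{\Lambda}$ to the previous proposition about $\Lambda$ via the involution $\mathbf{x} \mapsto \hat{\mathbf{x}}$ on $\mathbf{K}^N$ (defined in the paragraph preceding the proposition). The content of the proposition is essentially that $\hat{\Lambda}_\alpha$ and $\Lambda_{\alpha^{-1}}$ are related by an index-reversal, so everything already proved for $\Lambda$ transfers, including the determinant formula and the inverse.

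First I would verify the basic properties of the involution. A direct check shows that $\hat{\mathbf x}$ again satisfies the GT interlacing constraints (the inequalities get reversed and signs flip, giving back the same system of inequalities), so $\mathbf{x}\mapsto \hat{\mathbf{x}}$ is a bijection of $\mathbf{K}^N$. Tracing through the definitions gives
\[
\mathrm{sh}(\hat{\mathbf{x}})= (-z_N,-z_{N-1},\ldots,-z_1), \quad \mathrm{ledge}(\hat{\mathbf{x}})=(-y_1,\ldots,-y_N), \quad \mathrm{redge}(\hat{\mathbf x})=(-x_1^1,\ldots,-x_N^N),
\]
whenever $\mathrm{sh}(\mathbf{x})=z$ and $\mathrm{redge}(\mathbf{x})=y$. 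In particular $\mathbf{x}\mapsto\hat{\mathbf x}$ restricts to a bijection between $\{\mathbf{x}\in \mathbf{K}^N:\mathrm{sh}(\mathbf{x})=z,\,\mathrm{redge}(\mathbf{x})=y\}$ and $\{\mathbf{u}\in \mathbf{K}^N:\mathrm{sh}(\mathbf{u})=\tilde z,\,\mathrm{ledge}(\mathbf{u})=\tilde y\}$ with $\tilde z=(-z_N,\ldots,-z_1)$ and $\tilde y=(-y_1,\ldots,-y_N)$, both of which lie in $W^N$. A short algebraic calculation using $\sum_i \hat{x}_i^k=-\sum_i x_i^k$ confirms the stated identity $\alpha^{\hat{\mathbf{x}}}=\beta^{\mathbf{x}}$ with $\beta=\alpha^{-1}$.

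Substituting $\mathbf{x}=\hat{\mathbf{u}}$ into the definition of $\hat\Lambda_\alpha(z,y)$ and using $\alpha_i^{-y_i}=\beta_i^{-\tilde y_i}$ then yields the crucial identity
\[
\hat\Lambda_\alpha(z,y)=\Lambda_\beta(\tilde z,\tilde y).
\]
The determinant formula now follows immediately from the previous proposition applied to $\Lambda_\beta$: the $(i,j)$-entry becomes $h^{(jN)}_{\tilde z_i-\tilde y_j-i+j}(\beta)=h^{(jN)}_{y_j-z_{N-i+1}-i+j}(\alpha^{-1})$, as claimed.

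For invertibility, I would define $\hat\Pi(y,z):=\Pi_\beta(\tilde y,\tilde z)$ and verify the determinantal formula for it by the same substitution $\tilde y_i-\tilde z_j-i+j=z_{N-j+1}-y_i-i+j$. Since $\Pi_\beta \Lambda_\beta=\Lambda_\beta \Pi_\beta=I$ on $W^N\times W^N$ and the maps $y\leftrightarrow \tilde y$, $z\leftrightarrow \tilde z$ are bijections between $\hat W^N$ and $W^N$, and between $W^N$ and $W^N$, the intertwining of sums over $y\in\hat W^N$ with sums over $\tilde y\in W^N$ directly transports both inversion identities to $\hat\Lambda\hat\Pi=\hat\Pi\hat\Lambda=I$. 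The only step that requires genuine care is the bookkeeping for shapes and edges — verifying that the involution exchanges $\mathrm{redge}$ with $\mathrm{ledge}$ (up to sign and reversal) while sending $\mathrm{sh}$ to its negated reversal — since getting the $N-i+1$ re-indexing right is what produces the correct arguments of $h^{(jN)}$ and $e^{(iN)}$ in the final formulas.
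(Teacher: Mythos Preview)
Your proposal is correct and follows precisely the route the paper indicates: reduce $\hat\Lambda_\alpha$ to $\Lambda_{\alpha^{-1}}$ via the involution $\mathbf{x}\mapsto\hat{\mathbf{x}}$, then read off the determinantal formula and the inverse from the previously established results for $\Lambda$. The paper compresses all of this into a single sentence, so your write-up simply supplies the bookkeeping (the identification $\hat\Lambda_\alpha(z,y)=\Lambda_\beta(\tilde z,\tilde y)$ and the index substitutions) that the paper leaves implicit.
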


\medskip
The analogue of Corollary~\ref{cor:QrepLambda} follows immediately from this proposition.
\begin{corollary}
Suppose that $P$  is a $W^N\times W^N$ matrix of the form
\[
P(z,z^\prime)= \mathrm{det}\left\{ f({z^\prime_i-z_j-i+j}) \right\}
\]
for some function $f$ on $\mathbf Z$.
Suppose that $Q$ is a $\hat W^N\times \hat W^N$ matrix and that the
intertwining relation $P\hat{\Lambda} = \hat{\Lambda} Q$ holds. Then we have
for $y,y'\in \hat W^N$,
\[
Q(y,y^\prime)=  \mathrm{det} \left\{ \hat{f}^{(ij)}_{\alpha^{-1}}({y^\prime_i-y_j+i-j}) \right\}.
\]
\end{corollary}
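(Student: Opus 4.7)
The plan is to derive this corollary by reducing it to the analogous statement for $\Lambda$, namely Corollary~\ref{cor:QrepLambda}. The reduction exploits the involution $\mathbf{x}\mapsto\hat{\mathbf{x}}$ on Gelfand-Tsetlin patterns introduced before the previous proposition. Writing $z^*\in W^N$ for the vector defined by $z^*_i=-z_{N-i+1}$, the observations that $\mathrm{sh}(\hat{\mathbf{x}})=\mathrm{sh}(\mathbf{x})^*$, $\mathrm{ledge}(\hat{\mathbf{x}})=-\mathrm{redge}(\mathbf{x})$, and $\alpha^{\hat{\mathbf{x}}}=(\alpha^{-1})^{\mathbf{x}}$ (as noted in the text) yield the identities
\[
\hat{\Lambda}_\alpha(z,y)=\Lambda_{\alpha^{-1}}(z^*,-y),\qquad \hat{\Pi}_\alpha(y,z)=\Pi_{\alpha^{-1}}(-y,z^*),
\]
the second of which can also be read off directly from the determinantal formulas in the proposition by substituting $z_{N-j+1}=-z^*_j$.

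Next, I would verify that the determinantal form of $P$ is preserved under the map $z\mapsto z^*$. Substituting $z_j=-z^*_{N-j+1}$ and $z'_i=-z'^{*}_{N-i+1}$ inside the determinant, then reindexing by the order-reversing permutation $\sigma(k)=N-k+1$ (whose row and column sign contributions cancel), gives
\[
P(z,z')=\det\bigl\{g(z'^{*}_i-z^*_j-i+j)\bigr\},\qquad g(k):=f(-k).
\]
Denote the matrix on the right by $\tilde P(z^*,z'^*)$. Since $z^*$ ranges over $W^N$ as $z$ does, substituting the above identities into the hypothesized intertwining $P\hat{\Lambda}_\alpha=\hat{\Lambda}_\alpha Q$ and setting $\bar Q(\bar y,\bar y'):=Q(-\bar y,-\bar y')$ for $\bar y,\bar y'\in W^N$ converts the identity into
\[
\tilde P\,\Lambda_{\alpha^{-1}}=\Lambda_{\alpha^{-1}}\bar Q.
\]

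Third, I would invoke Corollary~\ref{cor:QrepLambda} applied to $\tilde P$, $\Lambda_{\alpha^{-1}}$ and $\bar Q$ (with the function $g$ in place of $f$) to get
\[
\bar Q(\bar y,\bar y')=\det\bigl\{g^{(ij)}_{\alpha^{-1}}(\bar y'_i-\bar y_j-i+j)\bigr\}.
\]
Finally, setting $\bar y=-y$, $\bar y'=-y'$ and applying the elementary identity $g^{(ij)}_\beta(-m)=\hat{f}^{(ij)}_\beta(m)$ termwise converts this into the advertised expression. This last identity is immediate from comparing the two branches of definition~\eqref{eq:defwij} for $f^{(ij)}_\alpha$ with those of $\hat{f}^{(ij)}_\alpha$, since replacing $f$ by $g(\cdot)=f(-\cdot)$ and negating the argument replaces each summand $f(k+\ell)$ by $f(m-\ell)$ when $k=-m$.

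The computation itself is entirely mechanical once the reversal identities for $\hat{\Lambda}_\alpha$ and $\hat{\Pi}_\alpha$ are in hand, so the main obstacle is purely bookkeeping: correctly tracking the reversal permutation (both in showing that $P$ remains determinantal under $z\mapsto z^*$ and in invoking the previous corollary) and checking that the piecewise definitions of $f^{(ij)}_\alpha$ and $\hat{f}^{(ij)}_\alpha$ match up after the change of variables. This explains the author's comment that the corollary follows immediately from the preceding proposition.
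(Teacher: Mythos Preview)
Your argument is correct and follows essentially the same route as the paper. The paper packages the involution $\mathbf{x}\mapsto\hat{\mathbf{x}}$ into the preceding proposition (the explicit determinantal formulas for $\hat\Lambda$ and $\hat\Pi$) and then says the corollary is immediate, whereas you apply the involution directly at the level of the intertwining relation to reduce to Corollary~\ref{cor:QrepLambda}; these are the same computation organized slightly differently, and your bookkeeping (the reversal $z\mapsto z^*$, the sign cancellation from reordering rows and columns, and the identity $g^{(ij)}_\beta(-m)=\hat f^{(ij)}_\beta(m)$ for $g=f(-\cdot)$) is accurate.
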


\section*{Acknowledgements}
We thank Timo Sepp\"al\"ainen for his helpful comments. 
The work of ABD was supported by the Science Foundation Ireland, grant number SFI04/RP1/I512.

{\small
\bibliography{c:/bibdb}
\bibliographystyle{amsplain}
}

\end{document}